\numberwithin{equation}{theorem}
\renewcommand{\m}{\mathfrak{m}}
\renewcommand{\n}{\mathfrak{n}}
\DeclareMathOperator{\depth}{depth}
\theoremstyle{theorem}
\begin{document}
\title{A Sufficient condition for $F$-purity}
\author{Linquan Ma}
\address{Department of Mathematics\\ University of Michigan\\ Ann Arbor\\ Michigan 48109}
\email{lquanma@umich.edu}
\maketitle
\begin{abstract}
It is well known that nice conditions on the canonical module of a local ring have a strong impact in the study of strong $F$-regularity and $F$-purity. In this note, we prove that if $(R,\m)$ is an equidimensional and $S_2$ local ring that admits a canonical ideal $I\cong\omega_R$ such that $R/I$ is $F$-pure, then $R$ is $F$-pure. This greatly generalizes one of the main theorems in \cite{Enescupseudocanonicalcover}. We also provide examples to show that not all Cohen-Macaulay $F$-pure local rings satisfy the above property.
\end{abstract}

\section{introduction}

The purpose of this note is to investigate the condition that $R$ admits a
canonical ideal $I\cong\omega_R$ such that $R/I$ is $F$-pure. This
condition was first studied in \cite{Enescupseudocanonicalcover} and also in \cite{EnescuAfinitenessconditononlocalcohomology} using pseudocanonical covers. And in \cite{Enescupseudocanonicalcover} it was shown that this implies $R$ is $F$-pure under the additional hypothesis that $R$ is Cohen-Macaulay and $F$-injective. Applying some theory of canonical modules for non Cohen-Macaulay rings as well as some recent results in \cite{SharpFpurehasbigtestelement} and \cite{MalFinitenesspropertyoflocalcohomologyforFpurerings}, we are able to drop both the Cohen-Macaulay and $F$-injective condition: we only need to assume $R$ is equidimensional and $S_2$. We also provide examples to show that not all complete $F$-pure Cohen-Macaulay rings satisfy this condition. In fact, if $R$ is Cohen-Macaulay and $F$-injective, we show that this property is closely related to whether the natural injective Frobenius action on $H_\m^d(R)$ can be ``lifted" to an injective Frobenius action on $E_R$, the injective hull of the residue field of $R$. And instead of using pseudocanonical covers, our treatment uses the anti-nilpotent condition for modules with Frobenius action introduced in \cite{EnescuHochsterTheFrobeniusStructureOfLocalCohomology} and  \cite{SharpGradedAnnihilatorsofModulesovertheFrobeniusSkewpolynomialRingandTC}.

In Section 2 we summarize some results on canonical modules of non Cohen-Macaulay rings. These results are well known to experts. In Section 3 we do a brief review of the notions of $F$-pure and $F$-injective rings as well as some of the theory of modules with Frobenius action, and we prove our main result.

\section{canonical modules of non Cohen-Macaulay rings}

In this section we summarize some basic properties of canonical modules of non-Cohen-Macaulay local rings (for those we cannot find references, we give proofs). All these properties are characteristic free. Recall that the canonical module $\omega_R$ is defined to be a finitely generated $R$-module satisfying $\omega_R^{\vee}\cong H_\m^d(R)$ where ${}^{\vee}$ denotes the Matlis dual. Throughout this section we only require $(R,\m)$ is a Noetherian local ring. We do {\it not} need the Cohen-Macaulay or even excellent condition.

\begin{proposition}[{\it cf.} \cite{AoyamaSomeBasicResultsOnCanonicalModules} or Remark 2.2 (c) in \cite{HochsterHunekeIndecomposable}]
\label{existence of canonical modules}
Let $(R,\m)$ be a homomorphic image of a Gorenstein local ring $(S,\n)$. Then $\Ext_S^{\dim S-\dim R}(R, S)\cong\omega_R$.
\end{proposition}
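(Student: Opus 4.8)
The plan is to deduce the statement from local duality over the Gorenstein ring $S$. Write $n=\dim S$ and $d=\dim R$, and note $n\geq d$ since $R$ is a quotient of $S$, say $R=S/J$ with $J\subset\n$. Then $R$ and $S$ share the residue field $k$, and $\m$ is the image of $\n$. First I would record the standard comparison between local cohomology computed over $R$ and over $S$: $H^i_{\m}(R)\cong H^i_{\n}(R)$ for all $i$, where on the right $R$ is regarded as an $S$-module. In particular $H^i_{\m}(R)=0$ for $i>d$, and $H^d_{\m}(R)$ is the module whose Matlis dual is, by definition, $\omega_R$.

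Next I would invoke local duality for the Gorenstein local ring $(S,\n)$: since $\omega_S\cong S$, for every finitely generated $S$-module $M$ there is an isomorphism $\Ext_S^{n-i}(M,S)\cong H^i_{\n}(M)^{\vee}$, where $(-)^{\vee}$ denotes Matlis duality over $S$. Applying this with $M=R$ and $i=d$ gives $\Ext_S^{n-d}(R,S)\cong H^d_{\n}(R)^{\vee}\cong H^d_{\m}(R)^{\vee}$. At this point I would also note the compatibility of Matlis duals: because $E_R=\Hom_S(R,E_S)$ is the injective hull of $k$ over $R$, the functors $\Hom_S(-,E_S)$ and $\Hom_R(-,E_R)$ agree on $R$-modules, so the dual above may be read equally over $S$ or over $R$.

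To finish, I would verify that $N:=\Ext_S^{n-d}(R,S)$ has the defining property of the canonical module: it is finitely generated, being an $\Ext$ of finitely generated modules over the Noetherian ring $S$, and $N^{\vee}\cong H^d_{\m}(R)^{\vee\vee}\cong H^d_{\m}(R)$, the last isomorphism because $H^d_{\m}(R)$ is Artinian (local cohomology supported at the maximal ideal), hence Matlis reflexive. Therefore $N\cong\omega_R$, using that the canonical module, when it exists, is determined up to isomorphism by this property.

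I do not expect a genuine obstacle; the argument is entirely standard. The two points deserving a little care are that local duality over $S$ is being applied in the possibly non-Cohen--Macaulay setting, where it nonetheless holds for arbitrary finitely generated modules, and the bookkeeping of which ring one takes the Matlis dual over, which is settled by the identity $E_R=\Hom_S(R,E_S)$. If $R$ is not complete one should also keep in mind that $(-)^{\vee\vee}$ reconstructs finitely generated modules only up to completion, which is exactly why it is the Artinian (hence reflexive) module $H^d_{\m}(R)$, rather than a finitely generated one, that gets doubly dualized in the last step.
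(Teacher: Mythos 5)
The paper itself offers no proof of this proposition (it only cites Aoyama and Hochster--Huneke), so your local-duality argument is the natural way to supply one, and most of it is standard. However, one step is false as stated when $S$ is not complete: the isomorphism $\Ext_S^{\,n-d}(R,S)\cong H^d_{\n}(R)^{\vee}$. Local duality over a Gorenstein local ring $S$ of dimension $n$ holds, for every finitely generated $S$-module $M$, in the form $H^i_{\n}(M)\cong\Hom_S\bigl(\Ext_S^{\,n-i}(M,S),E_S\bigr)$; this direction needs no completeness, since $H^i_{\n}(M)\cong H^i_{\widehat{\n}}(\widehat M)$, $\Ext$ commutes with the flat base change $S\to\widehat S$, and $\Hom_{\widehat S}(N\otimes_S\widehat S,E_S)\cong\Hom_S(N,E_S)$ by adjunction. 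The direction you invoke, $\Ext_S^{\,n-i}(M,S)\cong H^i_{\n}(M)^{\vee}$, only identifies $H^i_{\n}(M)^{\vee}$ with the \emph{completion} of the Ext module: already for $M=R=S$ non-complete and $i=n$ one has $\Ext_S^{0}(S,S)=S$ while $H^n_{\n}(S)^{\vee}\cong\widehat S$. Since your final verification $N^{\vee}\cong H^d_{\m}(R)^{\vee\vee}\cong H^d_{\m}(R)$ is obtained by dualizing that intermediate isomorphism, the argument as written does not go through for non-complete $S$, and the proposition makes no completeness assumption.

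The repair is immediate and in fact shortens the proof: apply the correct direction of duality with $M=R$ and $i=d$ to get $H^d_{\m}(R)\cong H^d_{\n}(R)\cong\Hom_S\bigl(\Ext_S^{\,n-d}(R,S),E_S\bigr)$, and then use your own observation that $E_R=\Hom_S(R,E_S)$, so that $\Hom_S(-,E_S)$ agrees with $\Hom_R(-,E_R)$ on $R$-modules. This shows directly that $N:=\Ext_S^{\,n-d}(R,S)$ is a finitely generated $R$-module with $N^{\vee}\cong H^d_{\m}(R)$, which is exactly the paper's definition of a canonical module; no double dualization, and no appeal to uniqueness of $\omega_R$, is then needed. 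Your remaining points (independence of base for local cohomology, finite generation of the Ext module) are fine.
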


\begin{lemma}
\label{nonzerodivisor on canonical modules}
Let $(R, \m)$ be a local ring that admits a canonical module $\omega_R$. Then every nonzerodivisor in $R$ is a nonzerodivisor on $\omega_R$.
\end{lemma}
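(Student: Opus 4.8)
The plan is to dualize. Recall that $\omega_R$ is, by definition, a finitely generated module with $\omega_R^{\vee}\cong H_\m^d(R)$, where $d=\dim R$ and $(-)^{\vee}=\Hom_R(-,E_R)$ is the Matlis dual. Since $E_R$ is injective, $(-)^{\vee}$ is an exact contravariant functor, and since it is $R$-linear it sends multiplication by an element $x\in R$ to multiplication by $x$. So the behaviour of the map $x\colon\omega_R\to\omega_R$ is controlled by the behaviour of $x\colon H_\m^d(R)\to H_\m^d(R)$.

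Concretely, fix a nonzerodivisor $x\in R$; I want to show $(0:_{\omega_R}x)=0$. First I would apply $(-)^{\vee}$ to the exact sequence $0\to (0:_{\omega_R}x)\to\omega_R\xrightarrow{\,x\,}\omega_R$, obtaining an exact sequence $H_\m^d(R)\xrightarrow{\,x\,}H_\m^d(R)\to (0:_{\omega_R}x)^{\vee}\to 0$. Hence it suffices to prove that multiplication by $x$ is \emph{surjective} on $H_\m^d(R)$: indeed this forces $(0:_{\omega_R}x)^{\vee}=0$, and since $(0:_{\omega_R}x)$ is a submodule of the finitely generated module $\omega_R$, it is itself finitely generated, so by Nakayama its Matlis dual vanishes only if the module itself is zero.

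Second, I would deduce the surjectivity of $x$ on $H_\m^d(R)$ from the short exact sequence $0\to R\xrightarrow{\,x\,}R\to R/xR\to 0$ (exact because $x$ is a nonzerodivisor) and its long exact sequence in local cohomology, the relevant piece being $\cdots\to H_\m^d(R)\xrightarrow{\,x\,}H_\m^d(R)\to H_\m^d(R/xR)\to\cdots$. A nonzerodivisor lies in no minimal prime of $R$, so $\dim R/xR=d-1<d$, and therefore $H_\m^d(R/xR)=0$ by Grothendieck's vanishing theorem. This gives the required surjectivity, and combined with the previous paragraph completes the argument.

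There is no real obstacle here: the proof is entirely formal once one has the vanishing $H_\m^d(R/xR)=0$ in hand. The only two points deserving explicit mention are the exactness and faithfulness of Matlis duality on finitely generated modules (so that $(0:_{\omega_R}x)^{\vee}=0$ genuinely implies $(0:_{\omega_R}x)=0$) and the dimension drop $\dim R/xR<\dim R$ for a nonzerodivisor, which triggers the top-degree vanishing. Notably, none of this uses the Cohen--Macaulay or excellence hypotheses, in keeping with the standing assumptions of this section.
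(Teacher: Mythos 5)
Your proof is correct, and it takes a genuinely different route from the paper. The paper first treats the case where $R$ is a homomorphic image of a Gorenstein local ring $S$, identifying $\omega_R$ with $\Ext_S^{\height J}(R,S)$ and killing $\Ext_S^{\height J}(R/xR,S)$ by a grade/height count, and then handles the general case by passing to the completion (using that $\widehat{\omega_R}$ is a canonical module for $\widehat{R}$). You instead never leave $R$: you use only the defining property $\omega_R^{\vee}\cong H_\m^d(R)$, the exactness and faithfulness of Matlis duality on finitely generated modules, and Grothendieck vanishing $H_\m^d(R/xR)=0$ (valid since a nonzerodivisor avoids all minimal primes, so $\dim R/xR<d$) to conclude that $x$ acts surjectively on $H_\m^d(R)$ and hence injectively on $\omega_R$. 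All the steps check out: the dual of multiplication by $x$ is multiplication by $x$, the dualized sequence $H_\m^d(R)\xrightarrow{x}H_\m^d(R)\to(0:_{\omega_R}x)^{\vee}\to 0$ is exact, and the Nakayama argument shows a nonzero finitely generated module has nonzero Matlis dual. What your approach buys is the elimination of both the Gorenstein presentation and the completion step, arguing directly from the Matlis-dual definition used in this paper; what the paper's approach buys is that it runs through the already-stated $\Ext$ description of $\omega_R$ (Proposition \ref{existence of canonical modules}) and stays close to the classical treatment of Aoyama. The two arguments are of course related by local duality over $S$, but yours is self-contained in the stated generality.
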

\begin{proof}
First assume $R$ is a homomorphic image of a Gorenstein local ring $S$ and write $R=S/J$. Let $x$ be a nonzerodivisor on $R$. The long exact sequence for $\Ext$ yields: \[\Ext_S^{\height J}(R/xR, S)\rightarrow \Ext_S^{\height J}(R, S)\xrightarrow{x} \Ext_S^{\height J}(R, S). \]
But $\Ext_S^{\height J}(R/xR, S)=0$ because the first non-vanishing $\Ext$ occurs at $\depth_{(J+x)}S=\height (J+x)=\height J+1$. So $x$ is a nonzerodivisor on $\Ext_S^{\height J}(R, S)\cong \Ext_S^{\dim S-\dim R}(R, S)\cong\omega_R$.

In the general case, for a nonzerodivisor $x$ in $R$, if we have $0\rightarrow N\rightarrow \omega_R\xrightarrow{x}\omega_R$, we may complete to get $0\rightarrow \widehat{N}\rightarrow \widehat{\omega_R}\xrightarrow{x}\widehat{\omega_R}$. It is easy to see that $\widehat{\omega_R}$ is a canonical module for $\widehat{R}$ and $x$ is a nonzerodivisor on $\widehat{R}$. Now since $\widehat{R}$ is a homomorphic image of a Gorenstein local ring, we have $\widehat{N}=0$ and hence $N=0$.
\end{proof}

\begin{proposition}[{\it cf.} Corollary 4.3 in \cite{AoyamaSomeBasicResultsOnCanonicalModules} or Remark 2.2 (i) in \cite{HochsterHunekeIndecomposable}]
\label{formation of canonical module commutes with localization}
Let $(R,\m)$ be a local ring with canonical module $\omega_R$. If $R$ is equidimensional, then for every $P\in \Spec{R}$, $(\omega_R)_P$ is a canonical module for $R_P$.
\end{proposition}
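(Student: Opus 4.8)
The plan is to reduce to the case in which $R$ itself is a homomorphic image of a Gorenstein local ring, and then combine Proposition~\ref{existence of canonical modules} with the fact that $\Ext$ commutes with localization; the two resulting descriptions of the canonical module of $R_P$ are then reconciled by a dimension count in which equidimensionality is used in an essential way.

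\emph{The main case: $R=S/J$ with $(S,\n)$ Gorenstein local.} Write $n=\dim S$ and $d=\dim R$, fix $P\in\Spec R$, and let $Q\in\Spec S$ be the preimage of $P$, so that $S_Q$ is Gorenstein local of dimension $h:=\height_S Q$ and $R_P\cong S_Q/JS_Q$. Since $R$ is a finitely generated $S$-module, $\Ext$ commutes with the flat base change $S\to S_Q$, so Proposition~\ref{existence of canonical modules} applied to $R$ over $S$ gives
\[ (\omega_R)_P \;\cong\; \bigl(\Ext_S^{\,n-d}(R,S)\bigr)_P \;\cong\; \Ext_{S_Q}^{\,n-d}(R_P,S_Q), \]
while the same proposition applied to $R_P$ over $S_Q$ gives $\omega_{R_P}\cong\Ext_{S_Q}^{\,h-\dim R_P}(R_P,S_Q)$. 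It therefore suffices to verify the equality of cohomological degrees $n-d=h-\dim R_P$. Now $S$ is Cohen-Macaulay, hence catenary and equidimensional, so $h+\dim S/Q=n$; and $R$ is catenary, being a quotient of the catenary ring $S$, so the hypothesis that $R$ is equidimensional yields $\dim R_P+\dim R/P=d$. Since $S/Q\cong R/P$ we have $\dim S/Q=\dim R/P$, and these two identities combine to give $h-\dim R_P=n-d$, as needed. This is exactly the point at which equidimensionality cannot be dropped: in general only $\dim R_P+\dim R/P\le d$ holds, and a strict inequality would shift the cohomological degree.

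\emph{Reduction to the main case, and the main obstacle.} In general $R$ need not be a Gorenstein quotient, but $\widehat R$ is. The bridge is the observation that a finitely generated module $C$ over a local ring $A$ is a canonical module of $A$ if and only if $\widehat C=C\otimes_A\widehat A$ is a canonical module of $\widehat A$: indeed $E_A$ is also an injective hull of the residue field of $\widehat A$, one has $\Hom_A(C,E_A)\cong\Hom_{\widehat A}(\widehat C,E_A)$ by adjunction, $H^{\dim A}_{\m_A}(A)$ is Artinian (hence unchanged by completion), and $\dim A=\dim\widehat A$. Applying this to $A=R$ identifies $\widehat{\omega_R}$ with the canonical module of the Gorenstein quotient $\widehat R$, and applying it to $A=R_P$ reduces the claim to a statement about $\widehat{R_P}$. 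The remaining --- and most delicate --- step is to match $(\omega_R)_P$ with the Gorenstein-quotient computation carried out after completion; this requires following the prime $P$ along the faithfully flat map $R\to\widehat R$ and again uses equidimensionality, now to control the dimensions of the relevant primes of $\widehat R$ and of the formal fibres. This bookkeeping is where the real work lies; in the generality stated it is precisely what is carried out in \cite{AoyamaSomeBasicResultsOnCanonicalModules} and \cite{HochsterHunekeIndecomposable}, to which we refer for the details.
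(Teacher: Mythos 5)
For this proposition the paper supplies no argument of its own --- it is quoted from Corollary 4.3 of \cite{AoyamaSomeBasicResultsOnCanonicalModules} and Remark 2.2(i) of \cite{HochsterHunekeIndecomposable} --- so the only question is whether your write-up stands on its own. Your main case is correct and complete: writing $n=\dim S$, $d=\dim R$, $Q\subseteq S$ for the preimage of $P$ and $h=\height Q$, flat base change along $S\to S_Q$ identifies $(\omega_R)_P$ with $\Ext_{S_Q}^{\,n-d}(R_P,S_Q)$, Proposition~\ref{existence of canonical modules} applied to $R_P=S_Q/JS_Q$ gives $\omega_{R_P}\cong\Ext_{S_Q}^{\,h-\dim R_P}(R_P,S_Q)$, and the degrees match because $S$ is Cohen--Macaulay while a catenary equidimensional local ring satisfies $\dim R_P+\dim R/P=\dim R$; this is exactly where equidimensionality enters, and it is essentially the content of the Hochster--Huneke reference, which works with homomorphic images of Gorenstein local rings.

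The gap is your final paragraph. As a proof of the proposition in the stated generality (an arbitrary Noetherian local ring possessing a canonical module), the passage to $\widehat R$ is only an outline, and the step you defer to the references is not bookkeeping --- it is the theorem. Localization does not commute with completion, so one must choose a prime $P'$ of $\widehat R$ minimal over $P\widehat R$ with $\dim \widehat R/P'=\dim R/P$, know that $(\omega_{\widehat R})_{P'}$ is a canonical module of $\widehat R_{P'}$ --- which cannot be done by simply citing your main case for $\widehat R$, since equidimensionality of $R$ is not automatically inherited by $\widehat R$, so one needs the support-restricted version of the statement (Aoyama's Theorem 4.2) --- and then descend the canonical-module property along the flat local map $R_P\to\widehat R_{P'}$, whose closed fibre is an Artinian formal fibre that need not be Gorenstein, so the naive base-change formula for canonical modules is not available. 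None of these points is addressed in your sketch. Since the paper itself only cites Aoyama and Hochster--Huneke here, your proposal is no worse than the paper's treatment, and the case you do prove covers, e.g., complete rings or any quotient of a Gorenstein local ring; but as a self-contained proof of the proposition as stated it is incomplete.
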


\begin{proposition}
\label{canonical ideal has a nonzero divisor}
Let $(R,\m)$ be a local ring with canonical module $\omega_R$. If $R$ is equidimensional and unmixed, then the following are equivalent
\begin{enumerate}
\item There exists an ideal $I\cong\omega_R$.
\item $R$ is generically Gorenstein (i.e., $R_P$ is Gorenstein for every minimal prime of $R$).
\end{enumerate}
Moreover, when the equivalent conditions above hold, $I$ contains a nonzerodivisor of $R$.
\end{proposition}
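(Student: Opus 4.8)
The plan is to reduce everything to the minimal primes of $R$, where $R$ becomes zero-dimensional, and to extract both the equivalence and the final assertion from a length count there; the real content is the implication $(2)\Rightarrow(1)$, which I will prove by embedding $\omega_R$ into the total quotient ring $Q=\Frac(R)$ and clearing denominators. First note that the hypotheses force $\operatorname{Ass}(R)=\operatorname{Min}(R)$: an embedded prime would strictly contain a minimal prime and hence have strictly smaller dimension, contradicting unmixedness. Consequently the set of zerodivisors of $R$ equals the union $P_1\cup\dots\cup P_t$ of the (finitely many) minimal primes, and $Q=\Frac(R)$ is a zero-dimensional, hence Artinian, ring with $Q\cong\prod_{i=1}^{t}R_{P_i}$.

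For $(1)\Rightarrow(2)$ together with the ``Moreover'' clause, let $I\cong\omega_R$ be an ideal and let $P$ be a minimal prime. Since $R$ is equidimensional, Proposition~\ref{formation of canonical module commutes with localization} says $(\omega_R)_P$ is a canonical module for the Artinian local ring $R_P$, so $\bigl((\omega_R)_P\bigr)^{\vee}\cong H^0_{PR_P}(R_P)=R_P$, \ie $(\omega_R)_P\cong R_P^{\vee}=E_{R_P}$; since Matlis duality over the complete ring $R_P$ preserves length, $\length_{R_P}\bigl((\omega_R)_P\bigr)=\length_{R_P}(R_P)$. As $I_P\cong(\omega_R)_P$ is an ideal of $R_P$ of this length, $I_P=R_P$, hence $R_P\cong(\omega_R)_P\cong E_{R_P}$ is self-injective, \ie $R_P$ is Gorenstein; this gives $(2)$. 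The same equality $I_P=R_P$ shows $I\nsubseteq P$ for every minimal prime $P$, so $I$ is not contained in the union of the minimal primes, \ie $I$ contains a nonzerodivisor of $R$.

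For $(2)\Rightarrow(1)$, I start from $Q\cong\prod_i R_{P_i}$. By Lemma~\ref{nonzerodivisor on canonical modules} every nonzerodivisor of $R$ is a nonzerodivisor on $\omega_R$, so the natural map $\omega_R\to Q\otimes_R\omega_R$ is injective. Decomposing the $Q$-module $Q\otimes_R\omega_R$ over the factors of $Q$, its $i$-th component is $(\omega_R)_{P_i}$, which by Proposition~\ref{formation of canonical module commutes with localization} is a canonical module for $R_{P_i}$ and hence, by hypothesis $(2)$, isomorphic to $R_{P_i}$; therefore $Q\otimes_R\omega_R\cong Q$ as $Q$-modules, giving an injection $\iota\colon\omega_R\hookrightarrow Q$. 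Since $\omega_R$ is finitely generated, a common denominator $s$ (a nonzerodivisor of $R$) for the images under $\iota$ of a finite generating set makes $m\mapsto s\,\iota(m)$ an $R$-linear injection of $\omega_R$ into $R$ whose image is an ideal $I\cong\omega_R$; this gives $(1)$.

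I expect the main obstacle to be the care required in $(2)\Rightarrow(1)$: because $Q$ is only a finite product of Artinian local rings rather than a field, one cannot invoke the familiar embedding of a torsion-free finitely generated module over a one-dimensional domain, and the isomorphism $Q\otimes_R\omega_R\cong Q$ has to be assembled factor by factor and checked to be $Q$-linear, so that multiplying by the common denominator $s$ genuinely lands inside $R$. The reduction to $\operatorname{Ass}(R)=\operatorname{Min}(R)$ and the length bookkeeping in the Artinian localizations are the remaining points to spell out, but both are routine.
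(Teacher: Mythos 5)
Your proof is correct and follows essentially the same route as the paper: localize at the minimal primes and compare lengths for $(1)\Rightarrow(2)$, and for $(2)\Rightarrow(1)$ embed $\omega_R$ into the total quotient ring $W^{-1}R\cong\prod_P R_P$ (using Lemma \ref{nonzerodivisor on canonical modules} for injectivity) and clear denominators. The only difference is cosmetic: for the ``moreover'' clause the paper notes that $W^{-1}I$ is free and hence contains a nonzerodivisor, while you deduce $IR_P=R_P$ at each minimal prime and apply prime avoidance — an equivalent argument.
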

\begin{proof}
Since $R$ is equidimensional, we know that $\omega_{R_P}\cong (\omega_R)_P$ for every prime ideal $P$ of $R$ by Proposition \ref{formation of canonical module commutes with localization}. Let $W$ be the multiplicative system of $R$ consisting of all nonzerodivisors and let $\Lambda$ be the set of minimal primes of $R$. Since $R$ is equidimensional and unmixed, $W$ is simply the complement of the union of the minimal primes of $R$.

$(1)\Rightarrow (2)$: If we have $I\cong \omega_R$, then for every $P\in \Lambda$, $\omega_{R_P}\cong (\omega_R)_P\cong IR_P\subseteq R_P$. But $R_P$ is an Artinian local ring, $l(\omega_{R_P})=l(R_P)$, so we must have $\omega_{R_P}\cong R_P$. Hence $R_P$ is Gorenstein for every $P\in \Lambda$, that is, $R$ is generically Gorenstein.

$(2)\Rightarrow (1)$: Since $R$ is generically Gorenstein, we know that for $P\in \Lambda$, $\omega_{R_P}\cong R_P$. Now we have: \[ W^{-1}\omega_R\cong \prod_{P\in \Lambda}(\omega_R)_P\cong \prod_{P \in \Lambda}\omega_{R_P}\cong \prod_{P\in \Lambda}R_P\cong W^{-1}R.\]
Therefore we have an isomorphism $W^{-1}\omega_R\cong W^{-1}R$. The restriction of the isomorphism to $\omega_R$ then yields an injection $j$: $\omega_R\hookrightarrow W^{-1}R$ because elements in $W$ are nonzero divisors on $\omega_R$ by Lemma \ref{nonzerodivisor on canonical modules}. The images of a finite set of generators of $\omega_R$ can be written as $r_i/w_i$. Let $w=\prod w_i$, we have $wj$: $\omega_R\hookrightarrow R$ is an injection. So $\omega_R$ is isomorphic to an ideal $I\subseteq R$.

Finally, when these equivalent conditions hold, we know that $W^{-1}I\cong \prod_{P\in\Lambda}R_P$ is free. So $W^{-1}I$ contains a nonzerodivisor. But whether $I$ contains a nonzerodivisor is unaffected by localization at $W$. So $I$ contains a nonzerodivisor.
\end{proof}

\begin{lemma}[{\it cf.} Proposition 4.4 in \cite{AoyamaSomeBasicResultsOnCanonicalModules} or Remark 2.2 (f) in \cite{HochsterHunekeIndecomposable}]
\label{R=Hom}
Let $(R, \m)$ be a local ring with canonical module $\omega_R$. Then $\omega_R$ is always $S_2$, and $R$ is equidimensional and $S_2$ if and only if $R\rightarrow \Hom_R(\omega_R, \omega_R)$ is an isomorphism.
\end{lemma}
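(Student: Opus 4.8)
The plan is to treat the two assertions separately, passing to $\widehat R$ when convenient: the $S_2$-property of $\omega_R$ via a concrete description of $\omega_R$, and the isomorphism statement by squeezing the natural map $h\colon R\to\Hom_R(\omega_R,\omega_R)$ between a computation over the primes of small height and one over the primes of large height. Two remarks make the reductions legitimate: $\widehat{\omega_R}$ is a canonical module for $\widehat R$ (its Matlis dual is again $H^d_\m(R)$), the $S_2$-property of a finitely generated module and the bijectivity of $h$ are unaffected by completion, and if $\widehat R$ is equidimensional then so is $R$ (for $\mathfrak p\in\operatorname{Min}R$, every minimal prime over $\mathfrak p\widehat R$ is already minimal in $\widehat R$). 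To see $\omega_R$ is $S_2$ one may thus take $R=S/J$ with $(S,\mathfrak n)$ regular local of dimension $n$; with $c=n-d$ we have $\omega_R\cong\Ext^c_S(R,S)$ by Proposition~\ref{existence of canonical modules}. For $\mathfrak p\in\Spec S$, $(\omega_R)_\mathfrak p\cong\Ext^c_{S_\mathfrak p}(R_\mathfrak p,S_\mathfrak p)$ is zero when $\height\mathfrak p<c$ and of finite length over $S_\mathfrak p$ when $\height\mathfrak p=c$ (its support lies in $V(J)$ and the grade of $R_\mathfrak p$ is then $c$), so $S_2$ is vacuous at primes of height at most $c$; for $\height\mathfrak p\ge c+1$ the required vanishings $H^0_{\mathfrak pS_\mathfrak p}((\omega_R)_\mathfrak p)=H^1_{\mathfrak pS_\mathfrak p}((\omega_R)_\mathfrak p)=0$ follow, via local duality over the regular local ring $S_\mathfrak p$, from an analysis of the biduality spectral sequence relating $\Ext^p_{S_\mathfrak p}(\Ext^q_{S_\mathfrak p}(R_\mathfrak p,S_\mathfrak p),S_\mathfrak p)$ to $R_\mathfrak p$ --- whose abutment is concentrated in total degree $0$ and whose inner terms vanish for $q<c$. (Alternatively one simply cites Aoyama for this.)

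Next I would record that $\Hom_R(M,\omega_R)$ is $S_2$ for every finitely generated $R$-module $M$: applying $\Hom_R(-,\omega_R)$ to a presentation $R^a\to R^b\to M\to 0$ places it in $0\to\Hom_R(M,\omega_R)\to\omega_R^b\to C'\to 0$ with $C'\subseteq\omega_R^a$, and since a submodule of $\omega_R^a$ has positive depth at any prime where $\omega_R$ does, the depth lemma gives $\depth\Hom_R(M,\omega_R)_\mathfrak p\ge\min(2,\depth(\omega_R)_\mathfrak p)\ge\min(2,\dim\Hom_R(M,\omega_R)_\mathfrak p)$. In particular $T:=\Hom_R(\omega_R,\omega_R)$ is $S_2$. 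Now if $h$ is an isomorphism, then $R\cong T$ is $S_2$, and $\Ann_R\omega_R=\ker h=0$, so (completing) $\omega_{\widehat R}$ is faithful too; writing $\widehat R=S/J$ as above, $(\omega_{\widehat R})_\mathfrak p\cong\Ext^c_{S_\mathfrak p}((\widehat R)_\mathfrak p,S_\mathfrak p)$ vanishes at every minimal prime $\mathfrak p$ of $\widehat R$ of coheight $<d$ (there the grade of $(\widehat R)_\mathfrak p$ over $S_\mathfrak p$ strictly exceeds $c$), so faithfulness forces every minimal prime of $\widehat R$ to have coheight $d$; hence $\widehat R$, and therefore $R$, is equidimensional.

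Conversely, assume $R$ is equidimensional and $S_2$, and argue with $R$ directly. By Proposition~\ref{formation of canonical module commutes with localization}, $(\omega_R)_\mathfrak p\cong\omega_{R_\mathfrak p}$ for all $\mathfrak p$. If $\height\mathfrak p\le 1$, then $R_\mathfrak p$ is Cohen--Macaulay (being $S_2$ of dimension at most $1$) with a canonical module, so the duality functor $\Hom_{R_\mathfrak p}(-,\omega_{R_\mathfrak p})$ on maximal Cohen--Macaulay modules shows $R_\mathfrak p\to\Hom_{R_\mathfrak p}(\omega_{R_\mathfrak p},\omega_{R_\mathfrak p})$ is an isomorphism (for $\mathfrak p$ minimal this is Matlis duality over the Artinian ring $R_\mathfrak p$). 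Hence $\ker h$ and $\coker h$ are supported in codimension $\ge 2$. As $R$ is $S_1$, $\operatorname{Ass}R=\operatorname{Min}R$, so a nonzero submodule of $R$ cannot be supported in codimension $\ge 1$; thus $\ker h=0$, $R\hookrightarrow T$, and $\dim T_\mathfrak p=\height\mathfrak p$ for every $\mathfrak p$. Finally, in $0\to R\to T\to\coker h\to 0$ both $R$ and $T$ are $S_2$ while $\coker h$ has support of codimension $\ge 2$; at a prime $\mathfrak p$ minimal in $\Supp(\coker h)$ one has $\depth R_\mathfrak p\ge 2$ and $\depth T_\mathfrak p\ge 2$, whence the depth lemma gives $\depth(\coker h)_\mathfrak p\ge\min(\depth R_\mathfrak p-1,\depth T_\mathfrak p)\ge 1$, contradicting $\depth(\coker h)_\mathfrak p=0$. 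So $\coker h=0$ and $h$ is an isomorphism.

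I expect the main obstacle to be the $S_2$-property of $\omega_R$ in this non-Cohen--Macaulay generality: it is a genuine structural input, obtained either from the $\Ext$-description by a somewhat delicate spectral-sequence bookkeeping or straight from the literature, while everything after it is routine provided one is disciplined about depths, supports, and Propositions~\ref{existence of canonical modules} and~\ref{formation of canonical module commutes with localization}. The only other point requiring attention is consistency of the passages to $\widehat R$, since ``$\widehat R$ equidimensional'' is strictly stronger than ``$R$ equidimensional'' and only the (true) implication from the former to the latter is ever used.
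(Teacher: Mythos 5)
The paper does not prove this lemma at all: it is one of the statements in Section 2 for which only references are given (Aoyama's Proposition 4.4, Hochster--Huneke Remark 2.2(f)), so the comparison is really with the standard arguments in those sources, and your write-up is essentially a correct reconstruction of them. Your forward implication (localize at primes of height at most one, where $R_\mathfrak{p}$ is Cohen--Macaulay of dimension $\le 1$ and canonical duality makes $h_\mathfrak{p}$ an isomorphism; then $\ker h=0$ because $\mathrm{Ass}\,R=\mathrm{Min}\,R$, and $\coker h=0$ by the depth lemma using the $S_2$ property of $R$ and of $T=\Hom_R(\omega_R,\omega_R)$), your converse (faithfulness of $\omega_{\widehat R}$ together with the vanishing of $\Ext^c_{S_\mathfrak{q}}$ on nonzero finite-length modules over $S_\mathfrak{q}$ of dimension $>c$ forces $\widehat R$, hence $R$, to be equidimensional, while $S_2$ of $T$ gives $S_2$ of $R$), the $S_2$ property of $\Hom_R(M,\omega_R)$, and the descent of equidimensionality from $\widehat R$ to $R$ are all sound, and they use only the paper's Propositions \ref{existence of canonical modules} and \ref{formation of canonical module commutes with localization} plus standard Cohen--Macaulay duality.

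Two assertions should be corrected, though neither is fatal. First, the blanket claim that the $S_2$ property of a finitely generated module is ``unaffected by completion'' is false in this generality: ascent from $M$ to $\widehat M$ needs Cohen--Macaulay (or $S_2$) formal fibres, which are not assumed here. You only ever use the true descent direction (for $\omega_R$ in part (a)), obtained by choosing over each $\mathfrak{p}$ a prime of $\widehat R$ minimal over $\mathfrak{p}\widehat R$, so state only that. Second, in the proof that $\omega_{\widehat R}$ is $S_2$, the sentence ``for $\height\mathfrak{p}\ge c+1$ the required vanishings $H^0_{\mathfrak{p}S_\mathfrak{p}}=H^1_{\mathfrak{p}S_\mathfrak{p}}=0$'' overstates what is needed and what is true: at a support prime of height exactly $c+1$ one has $\dim(\omega_{\widehat R})_\mathfrak{p}=1$, so $H^1\neq 0$ there and only $H^0=0$ is required. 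What the biduality spectral sequence actually gives (and it does, for degree reasons: no differential can hit or leave $E_2^{t,c}$ or $E_2^{t-1,c}$ when the total degree is nonzero) is $\Ext^{t}_{S_\mathfrak{p}}\bigl((\omega_{\widehat R})_\mathfrak{p},S_\mathfrak{p}\bigr)=0$ for $t=\height\mathfrak{p}\ge c+1$ and $\Ext^{t-1}_{S_\mathfrak{p}}\bigl((\omega_{\widehat R})_\mathfrak{p},S_\mathfrak{p}\bigr)=0$ for $t\ge c+2$, which by local duality over $S_\mathfrak{p}$ is exactly $\depth\ge\min(2,\dim)$; alternatively one gets the same bound more elementarily by dualizing a free resolution and counting depths of the syzygy modules, or by citing Aoyama, which is no more than the paper itself does for the whole lemma.
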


\begin{proposition}
\label{canonical ideal has height one}
Let $(R,\m)$ be an equidimensional and unmixed local ring that admits a canonical ideal $I\cong\omega_R$. Then $I$ is a height one ideal and $R/I$ is equidimensional and unmixed.
\end{proposition}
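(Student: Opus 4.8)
The plan is to run the whole argument on the short exact sequence $0\to\omega_R\to R\to R/I\to 0$ induced by the inclusion $\omega_R\cong I\hookrightarrow R$, using three inputs established above: Proposition \ref{canonical ideal has a nonzero divisor} to produce a nonzerodivisor inside $I$, Proposition \ref{formation of canonical module commutes with localization} to move the canonical module through a localization, and Lemma \ref{R=Hom} to exploit that canonical modules are $S_2$. We may assume $I\neq R$ (otherwise $R/I=0$ and there is nothing to prove). By Proposition \ref{canonical ideal has a nonzero divisor}, $I$ contains a nonzerodivisor $x$ of $R$; hence no associated prime and no minimal prime of $R$ contains $I$, so every prime containing $I$ has height $\ge 1$ and $\dim R/I\le\dim R-1$. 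Write $d=\dim R$.

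The crucial step will be to show that every $Q\in\operatorname{Ass}_R(R/I)$ has height exactly $1$. Such a $Q$ contains $I$, hence $x$, so $\height Q\ge 1$; I will suppose $\height Q\ge 2$ and derive a contradiction. Localizing $0\to\omega_R\to R\to R/I\to 0$ at $Q$ and using that $R$ is equidimensional, Proposition \ref{formation of canonical module commutes with localization} gives $(\omega_R)_Q\cong\omega_{R_Q}$, which is $S_2$ by Lemma \ref{R=Hom}; since $\omega_{R_Q}$ is an $S_2$ module of dimension $\dim R_Q=\height Q\ge 2$, this forces $\depth\omega_{R_Q}\ge 2$, so $H^0_{QR_Q}(\omega_{R_Q})=H^1_{QR_Q}(\omega_{R_Q})=0$. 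The long exact sequence of local cohomology supported at $QR_Q$ then yields $H^0_{QR_Q}(R_Q)\cong H^0_{QR_Q}\!\big((R/I)_Q\big)$, and the right-hand side is nonzero because $Q\in\operatorname{Ass}(R/I)$. Thus $H^0_{QR_Q}(R_Q)\neq 0$, i.e. $Q\in\operatorname{Ass}R$ — impossible, since $Q$ contains the nonzerodivisor $x$. Hence $\height Q=1$; in particular every minimal prime of $I$ has height $1$, so $\height I=1$.

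It remains to see that $R/I$ is equidimensional and unmixed. Since $R$ is catenary (this follows from $R$ admitting a canonical module, e.g. by passing to $\widehat R$, a homomorphic image of a regular local ring) and equidimensional, we get $\dim R/Q=d-\height Q=d-1$ for every $Q\in\operatorname{Ass}(R/I)$; therefore $\dim R/I=d-1$ and all associated primes of $R/I$ have dimension $d-1$, which is exactly the claim. The main obstacle is the local cohomology computation of the middle step: the point to get right is that the $S_2$-property of the localized canonical module kills $H^0$ and $H^1$, so that an associated prime of $R/I$ of height $\ge 2$ would be forced to be associated to $R$ itself — which is forbidden because it contains a nonzerodivisor. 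A secondary technical point is that the equidimensionality of $R/I$ genuinely uses that $R$ is catenary (in addition to being equidimensional), rather than just the local-cohomology information.
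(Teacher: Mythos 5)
Your core argument is exactly the paper's: suppose some $Q\in\operatorname{Ass}_R(R/I)$ has height at least $2$, localize at $Q$, use Proposition \ref{formation of canonical module commutes with localization} together with the $S_2$-property of the canonical module (Lemma \ref{R=Hom}) to conclude $\depth\omega_{R_Q}\geq 2$, and read off from the local cohomology long exact sequence that $H^0_{QR_Q}((R/I)_Q)$ would have to vanish (equivalently, that $Q$ would have to be associated to $R$), contradicting the nonzerodivisor in $I$ supplied by Proposition \ref{canonical ideal has a nonzero divisor}. That part is correct and coincides with the paper's proof.

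The one step I would not accept as written is the parenthetical claim that $R$ is catenary ``by passing to $\widehat R$, a homomorphic image of a regular local ring.'' The completion of \emph{every} Noetherian local ring is catenary (even excellent), yet non-catenary Noetherian local rings exist, so catenarity does not descend from $\widehat R$ to $R$ in this way. Under the present hypotheses the fact you want is nevertheless true, but the honest route goes through quasi-unmixedness: since $R$ is equidimensional, $(\omega_R)_P\cong\omega_{R_P}\neq 0$ for every minimal prime $P$ by Proposition \ref{formation of canonical module commutes with localization}, so $\Supp\omega_R=\Spec R$ and hence $\Supp\omega_{\widehat R}=\Spec\widehat R$; on the other hand, for a complete local ring the support of the canonical module is the union of the top-dimensional components (see \cite{AoyamaSomeBasicResultsOnCanonicalModules} or Remark 2.2 in \cite{HochsterHunekeIndecomposable}), so $\widehat R$ is equidimensional, and quasi-unmixed local rings are universally catenary by Ratliff's theorem. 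With that repaired, your concluding step ($\dim R/Q=d-\height Q=d-1$ for all $Q\in\operatorname{Ass}_R(R/I)$) is fine; note that the paper's own one-line deduction of equidimensionality and unmixedness of $R/I$ from equidimensionality of $R$ implicitly uses the same dimension formula, so your instinct to flag catenarity was sound --- only the justification you offered for it is not.
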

\begin{proof}
By Proposition \ref{canonical ideal has a nonzero divisor}, $I$ contains a nonzerodivisor, so its height is at least one. Now we choose a height $h$ associated prime $P$ of $I$ with $h\geq 2$. We localize at $P$, $PR_P$ becomes an associated prime of $IR_P$. In particular, $R_P/IR_P$ has depth $0$ so $H^0_{PR_P}(R_P/IR_P)\neq 0$.

However, by Proposition \ref{formation of canonical module commutes with localization}, $IR_P$ is a canonical ideal of $R_P$, which has dimension $h\geq 2$. Now the long exact sequence of local cohomology gives \[\to H^0_{PR_P}(R_P)\to H^0_{PR_P}(R_P/IR_P)\to H^1_{PR_P}(IR_P)\to.\]
We have $\depth R_P\geq 1$ ($I$ contains a nonzerodivisor) and $\depth IR_P\geq 2$ (the canonical module is always $S_2$ by Lemma \ref{R=Hom}). Hence $H^0_{PR_P}(R_P)=H^1_{PR_P}(IR_P)=0$. The above sequence thus implies $H^0_{PR_P}(R_P/IR_P)=0$ which is a contradiction.

Hence we have shown that every associated prime of $I$ has height one. Since $R$ is equidimensional, this proves $I$ has height one and $R/I$ is equidimensional and unmixed.
\end{proof}

\begin{proposition}[{\it cf.} Page 531 in \cite{HochsterCanonicalelementsinlocalcohomologymodules}]
\label{top local cohomology of canonical module is iso to E}
Let $(R,\m)$ be a local ring of dimension $d$ which admits a canonical module $\omega_R$. Then for every finitely generated $R$-module $M$, $H_\m^d(M)\cong \Hom_R(M, \omega_R)^{\vee}$.
\end{proposition}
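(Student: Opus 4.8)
The plan is to exhibit both sides of the asserted isomorphism as the value at $M$ of a right exact, $R$-linear functor on finitely generated $R$-modules, and then to invoke the elementary fact that such a functor is determined (up to natural isomorphism) by its value at $R$. Precisely, I would first record the following principle: if $T$ is a covariant, $R$-linear, right exact functor on the category of finitely generated $R$-modules, then the natural map $\tau_M\colon T(R)\otimes_R M\to T(M)$ given by $t\otimes m\mapsto T(\lambda_m)(t)$, where $\lambda_m\colon R\to M$ is multiplication by $m$, is an isomorphism for every finitely generated $M$. Indeed $\tau_R$ is the canonical identification and $T$ is additive, so $\tau_{R^n}$ is an isomorphism; for general $M$ one picks a finite presentation $R^a\to R^b\to M\to 0$, applies the two right exact functors $T(R)\otimes_R-$ and $T(-)$, and concludes (five lemma) that $\tau_M$ is an isomorphism.

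Next I would verify that both functors at hand meet these hypotheses. For $T_1(-)=H_\m^d(-)$: local cohomology is an $R$-linear functor, and it is right exact in degree $d$ because $\dim M\le\dim R=d$ forces $H_\m^{d+1}(M)=0$ by Grothendieck vanishing, so the long exact sequence of local cohomology terminates after $H_\m^d$. For $T_2(-)=\Hom_R(-,\omega_R)^{\vee}$: this is the composite of the contravariant $R$-linear functors $\Hom_R(-,\omega_R)$, which is left exact, and $(-)^{\vee}=\Hom_R(-,E_R)$, which is exact; hence the composite is covariant and $R$-linear, and it is right exact because the exact functor $(-)^{\vee}$ carries the left exactness of $\Hom_R(-,\omega_R)$ to the right exactness of $T_2$.

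Granting this, the argument is essentially one line. Applying the principle to $T_1$ gives $H_\m^d(M)\cong H_\m^d(R)\otimes_R M$; applying it to $T_2$ gives $\Hom_R(M,\omega_R)^{\vee}\cong \Hom_R(R,\omega_R)^{\vee}\otimes_R M=\omega_R^{\vee}\otimes_R M$; and $\omega_R^{\vee}\cong H_\m^d(R)$ is exactly the defining property of the canonical module. Combining the three isomorphisms yields $\Hom_R(M,\omega_R)^{\vee}\cong H_\m^d(R)\otimes_R M\cong H_\m^d(M)$. Observe that no Cohen--Macaulay, $S_2$, or equidimensionality hypothesis is used, consistent with the generality of the statement, and the case $\dim M<d$, in which both sides vanish, requires no separate treatment.

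I do not anticipate a genuine obstacle. The only point that demands care is the right exactness of $H_\m^d(-)$, which is exactly where the hypothesis $d=\dim R$ is essential; it would fail for $H_\m^i$ with $i<d$. I would also resist the temptation to build a direct natural transformation $H_\m^d(M)\to\Hom_R(M,\omega_R)^{\vee}$ out of the evaluation pairing $H_\m^d(M)\times\Hom_R(M,\omega_R)\to H_\m^d(\omega_R)$: this map looks natural, but turning it into the desired isomorphism would require identifying $H_\m^d(\omega_R)$ with the injective hull $E_R$, which is false in general (for instance when $R$ is not $S_2$, since then $\Hom_R(\omega_R,\omega_R)$ is a proper $S_2$-ification of $R$ and its Matlis dual is not $E_R$). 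Factoring both sides through $-\otimes_R M$ is precisely what sidesteps this.
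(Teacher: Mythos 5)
Your argument is correct, and it is essentially the standard justification behind the paper's citation: the paper gives no proof of this proposition (it simply refers to page 531 of Hochster's canonical-element paper), and the usual argument there is exactly your Eilenberg--Watts-style observation that $H_\m^d(-)$ and $\Hom_R(-,\omega_R)^{\vee}$ are both $R$-linear, additive, right exact functors on finitely generated modules, hence both naturally isomorphic to $H_\m^d(R)\otimes_R - \cong \omega_R^{\vee}\otimes_R -$. A small bonus worth recording is that your proof produces an isomorphism natural in $M$, which is what the paper implicitly needs later, e.g.\ in Lemma \ref{lemma on non-injectivity of top local cohomology}, where $H^d_\m(I)\to H^d_\m(R)$ is identified with the Matlis dual of $\Hom_R(R,\omega_R)\to \Hom_R(I,\omega_R)$; your closing remark about not routing the argument through $H^d_\m(\omega_R)\cong E_R$ (which requires the equidimensional $S_2$ hypothesis, as in Lemma \ref{R=Hom}) is also well taken.
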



\begin{remark}
\label{S_2 implies equidimensional}
\begin{enumerate}
\item When $(R,\m)$ is catenary, $R$ is $S_2$ implies $R$ is equidimensional. Hence, if we assume $R$ is excellent, then in the statement of Lemma \ref{R=Hom} and Proposition \ref{top local cohomology of canonical module is iso to E}, we don't need to assume $R$ is equidimensional.
\item For example, when $(R,\m)$ is a complete local domain, then both canonical modules and canonical ideals exist. And the canonical ideal must have height one and contains a nonzerodivisor.
\end{enumerate}
\end{remark}

\section{main result}

In this section we will generalize greatly one of the main results in \cite{Enescupseudocanonicalcover}. Throughout this section, we always assume $(R,\m)$ is a Noetherian local ring of equal characteristic $p>0$.

We first recall that a map of $R$-modules $N\rightarrow N'$ is {\it pure} if for every $R$-module $M$ the map $N\otimes_RM\rightarrow N'\otimes_RM$ is injective. A local ring $(R,\m)$ is called {\it $F$-pure} if the Frobenius endomorphism $F$: $R\rightarrow R$ is pure. The Frobenius endomorphism on $R$ induces a natural Frobenius action on each local cohomology module $H_\m^i(R)$ (see Discussion 2.2 and 2.4 in \cite{EnescuHochsterTheFrobeniusStructureOfLocalCohomology} for a detailed explanation of this). We say a local ring is {\it $F$-injective} if $F$ acts injectively on all of the local cohomology modules of $R$ with support in $\m$. We note that $F$-pure implies $F$-injective \cite{HochsterRobertsFrobeniusLocalCohomology}.

We will also use some notations introduced in \cite{EnescuHochsterTheFrobeniusStructureOfLocalCohomology} (see also \cite{MalFinitenesspropertyoflocalcohomologyforFpurerings}). We say an $R$-module $M$ is an
{\it $R\{F\}$-module} if there is a Frobenius action $F$: $M\rightarrow M$ such that for all $u\in M$, $F(ru)=r^pu$. We say
$N$ is an {\it $F$-compatible} submodule of $M$ if $F(N)\subseteq N$. We say an $R\{F\}$-module $W$ is {\it anti-nilpotent} if for every $F$-compatible submodule $V\subseteq W$, $F$ acts injectively on $W/V$.

One of the main results in \cite{MalFinitenesspropertyoflocalcohomologyforFpurerings} is the following:
\begin{theorem}[{\it cf.} Theorem 3.8 in \cite{MalFinitenesspropertyoflocalcohomologyforFpurerings}]
\label{F-pure implies anti-nilpotent}
If $(R,\m)$ is $F$-pure, then $H_{\m}^i(R)$ is anti-nilpotent for every $i$.
\end{theorem}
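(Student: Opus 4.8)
The plan is to reduce to a complete, $F$-finite (hence $F$-split) base and then to translate anti-nilpotence, via Matlis duality, into a surjectivity statement for Cartier operators.

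First I would carry out the standard reductions. Since $H^i_\m(R)$ is Artinian its lattice of submodules and the natural Frobenius action on it are unchanged when $R$ is replaced by $\widehat R$, so being anti-nilpotent depends only on the underlying $R\{F\}$-module and I may work over $\widehat R$; applying the Hochster--Huneke $\Gamma$-construction I may further replace $R$ by a faithfully flat $F$-finite local ring $R^{\Gamma}$ with $H^i_\m(R)\otimes_R R^{\Gamma}\cong H^i_{\m R^{\Gamma}}(R^{\Gamma})$ compatibly with Frobenius and which is still $F$-pure, and anti-nilpotence over $R^{\Gamma}$ descends to $R$ by a short faithful-flatness argument. So I assume $(R,\m)$ is complete, $F$-finite and $F$-pure, and fix a splitting $\phi\colon F_*R\to R$ of the Frobenius $R\to F_*R$, so that $\phi(F_*1)=1$.

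Next I would record two soft facts. (i) In the category of $R\{F\}$-modules the anti-nilpotent modules are closed under passing to $F$-compatible submodules, to quotients by $F$-compatible submodules, and to extensions; each is a two-line diagram chase using only that $F(rm)=r^pm$. (ii) Over the complete $F$-finite ring $R$, Matlis duality is an exact anti-equivalence between Artinian $R\{F\}$-modules and finitely generated Cartier modules (finitely generated $R$-modules $K$ with an $R$-linear $\kappa\colon F_*K\to K$), under which the Frobenius on $M:=H^i_\m(R)$ dualizes to an operator $\kappa$ on $K:=M^{\vee}$, the $F$-compatible submodules $N\subseteq M$ correspond bijectively to the $\kappa$-stable submodules $L=(M/N)^{\vee}$ of $K$, and $F$ acts injectively on $M/N$ precisely when $\kappa(F_*L)=L$. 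Thus the theorem is equivalent to the statement that $\kappa(F_*L)=L$ for every $\kappa$-stable submodule $L$ of $K=H^i_\m(R)^{\vee}$.

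The heart is this last statement. The model case is $K=R$, that is, $R$ Gorenstein and $i=\dim R$: here one may take $\kappa=\phi$, a $\kappa$-stable submodule is a $\phi$-compatible ideal $J$ with $\phi(F_*J)\subseteq J$, and since $\phi(F_*1)=1$ every $j\in J$ satisfies $j=j\,\phi(F_*1)=\phi(F_*(j^p))$ with $j^p\in J$, so $J\subseteq\phi(F_*J)$ and equality holds. For general $i$ I would write $R=S/\ba$ with $S$ a complete regular (hence $F$-split) $F$-finite local ring, identify $K$ with $\Ext^{\dim S-i}_S(R,S)$ by local duality over $S$, and express its Cartier structure through the Frobenius on $\omega_S\cong S$ twisted by a Fedder element $u\in(\ba^{[p]}:_S\ba)\setminus\m_S^{[p]}$ provided by $F$-purity of $R$; running the same ``multiply by $u$, then raise to the $p$-th power inside $S$'' computation on cocycle representatives and gluing the resulting $\Ext$-subquotients by the Cartier-side analogue of fact (i) yields $\kappa(F_*L)=L$ in general. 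I expect this last step to be the principal obstacle: for non-Cohen--Macaulay $R$ the deficiency modules $H^i_\m(R)^{\vee}$ are genuinely intricate, so one must check with care that surjectivity of the Cartier operator survives both the local-duality identification and the d\'evissage. In the $F$-finite case this is exactly what Sharp's results on graded annihilators over the Frobenius skew polynomial ring and the Enescu--Hochster analysis accomplish, and one may alternatively just invoke them.
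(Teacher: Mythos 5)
This statement is not proved in the paper at all: it is quoted background, cited as Theorem 3.8 of the author's paper on finiteness properties of local cohomology of $F$-pure rings, so there is no internal argument to compare yours against. Your preliminary steps are essentially correct: passing to $\widehat R$ and then to an $F$-finite faithfully flat $\Gamma$-construction (where $F$-pure becomes $F$-split), the closure properties of anti-nilpotent modules, and the Matlis-dual reformulation --- over a complete $F$-finite ring, anti-nilpotence of $M=H^i_\m(R)$ is equivalent to $\kappa(F_*L)=L$ for every $\kappa$-stable submodule $L$ of the Cartier module $K=M^{\vee}$ --- are all fine (modulo the unproved but true assertions that $F$-purity passes to $\widehat R$ and to a small $\Gamma$), and the model case $K=R$ is correct.

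The genuine gap is exactly where you flag it, and it is not a technical loose end but the entire content of the theorem. First, the proposed description of the dual Cartier structure on $\Ext^{\dim S-i}_S(R,S)$ as ``multiply by a Fedder element $u\in(\ba^{[p]}:_S\ba)\setminus\m_S^{[p]}$, then take $p$-th roots in $S$'' is never set up or justified: Fedder's element governs splittings of $R$ itself, equivalently the structure on the top module $\Ext^{\dim S-\dim R}_S(R,S)\cong\omega_R$, and gives no evident handle on the lower deficiency modules or on their arbitrary $\kappa$-stable submodules. Second, and more fundamentally, your model-case computation uses the multiplicative structure of $K=R$ in an essential way ($j=j\,\phi(F_*1)=\phi(F_*(j^p))$, i.e.\ $J^{[p]}\subseteq J$ together with $\phi(F_*1)=1$); there is no analogue of this for a general Cartier module, and mere surjectivity of $\kappa$ on $K$ --- which is only the dual of $F$-injectivity --- does not imply surjectivity on every stable submodule. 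If it did, $F$-injective would already imply anti-nilpotent, which is precisely the distinction that the $F$-purity hypothesis exists to supply; so ``running the same computation on cocycle representatives and gluing'' cannot be a routine d\'evissage, and the splitting must be used globally, as it is in Enescu--Hochster's treatment of the $F$-split case and in Ma's proof of the quoted theorem. Your closing fallback, to ``just invoke'' Sharp and Enescu--Hochster, is not a repair but a citation of (the $F$-finite case of) the very result being proved --- which is in effect what the paper itself does by quoting it.
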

We also recall the following result of Sharp in \cite{SharpFpurehasbigtestelement}:
\begin{theorem}[{\it cf.} Theorem 3.2 in \cite{SharpFpurehasbigtestelement}]
\label{sharp's theorem} A local ring $(R,\m)$ is $F$-pure if and
only if $E_R$ has a Frobenius action compatible with its $R$-module structure that is torsion-free
(injective).
\end{theorem}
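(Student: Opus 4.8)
The plan is to unwind $F$-purity into the purity of a single $R$-linear map and then transport the whole question to $E_R$ by Matlis duality, exploiting that $(-)^\vee=\Hom_R(-,E_R)$ is exact and \emph{faithful} over an arbitrary local ring, so that neither completeness nor $F$-finiteness is needed. Write $F_*R$ for $R$ viewed as an $R$-module via the Frobenius endomorphism $F\colon R\to R$; then $F$ itself is the $R$-linear map $f\colon R\to F_*R$, $f(r)=r^p$, and by definition $R$ is $F$-pure exactly when $f$ is pure.

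First I would prove a purity criterion valid over any local ring: a map $\phi\colon M\to N$ is pure if and only if $\phi^\vee$ is a \emph{split} surjection. Indeed, $(-)^\vee$ is exact and faithful, so $\phi\otimes_R W$ is injective precisely when $(\phi\otimes_R W)^\vee$ is surjective; the adjunction $\Hom_R(W\otimes_R M,E_R)\cong\Hom_R(W,M^\vee)$ identifies $(\phi\otimes_R W)^\vee$ with $\Hom_R(W,\phi^\vee)$, so $\phi$ is pure iff $\Hom_R(W,\phi^\vee)$ is surjective for every $W$, and taking $W=M^\vee$ yields a section of $\phi^\vee$. Applying this to $f$ gives
\[ R \text{ is $F$-pure}\quad\Longleftrightarrow\quad f^\vee\colon (F_*R)^\vee\longrightarrow R^\vee=E_R\ \text{is a split surjection}. \]
Since $f^\vee$ is evaluation at $1$ and $(F_*R)^\vee=\Hom_R(F_*R,E_R)$ is the module of $p^{-1}$-linear maps $R\to E_R$, the problem reduces to producing an $R$-linear section of evaluation-at-$1$.

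The remaining step is to match such a section with a torsion-free Frobenius action $\psi$ on $E_R$. The clean case is when $\psi$ is bijective: then $s(u)\colon F_*R\to E_R$, $s(u)(r)=\psi^{-1}\big(r\,\psi(u)\big)$, is a well-defined section (one checks $s(u)(1)=u$, that each $s(u)$ is $R$-linear out of $F_*R$, and that $u\mapsto s(u)$ is $R$-linear), and conversely a section can be repackaged into such a $\psi$. As a reality check in the right direction, an injective Frobenius action already forces $R$ to be reduced: if $x^{p^e}=0$ then $\psi^e(xu)=x^{p^e}\psi^e(u)=0$, so $xu=0$ for all $u$, that is $x\in\Ann_R E_R=0$, in agreement with $F$-purity implying reducedness.

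The hard part is exactly this correspondence in the general setting, where $R$ is neither complete nor $F$-finite. Then $F_*R$ need not be finitely generated and $F_*E_R$ need not be Artinian, so Matlis duality is not a perfect duality and one cannot argue by reflexivity; worse, an injective Frobenius action on the Artinian module $E_R$ need not be surjective, so the explicit formula for the section is unavailable and must be replaced by an extension argument using the injective-cogenerator property of $E_R$. The crux is to show that \emph{torsion-freeness} of the action corresponds precisely to the \emph{splitting} of evaluation-at-$1$ — equivalently, that Frobenius remains injective after tensoring with every module, not merely that some Frobenius splitting exists — and it is this matching, rather than the formal duality, where the real work lies.
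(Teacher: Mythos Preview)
The paper does not prove this statement; it is quoted from \cite{SharpFpurehasbigtestelement} and invoked as a black box in the proof of Theorem~\ref{Florian's condition implies F-pure}. There is therefore no argument in the paper to compare yours against.

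As a proof, your proposal is incomplete, and you acknowledge this. The reduction of $F$-purity to the split surjectivity of $f^\vee\colon(F_*R)^\vee\to E_R$ is correct and useful, but the bridge between ``$f^\vee$ splits'' and ``$E_R$ carries an injective Frobenius action'' is left open in \emph{both} directions. For ``injective $\psi\Rightarrow$ splitting'' you treat only bijective $\psi$; without $F$-finiteness or completeness an injective $p$-linear endomorphism of $E_R$ need not be surjective (its image is an $R$-submodule only of $F_*E_R$, which is generally not Artinian over $R$), so the formula $s(u)(r)=\psi^{-1}(r\psi(u))$ is genuinely unavailable and a new idea is required. For the converse ``splitting $\Rightarrow$ injective $\psi$'' you only assert that a section ``can be repackaged'' into a Frobenius action, with no construction; a section $s\colon E_R\to\Hom_R(F_*R,E_R)$ produces, for each $u$, a $p^{-1}$-linear map $R\to E_R$, which is the wrong variance, and extracting a $p$-linear endomorphism of $E_R$ from this data is not automatic. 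One route through the harder implication that avoids inverting $\psi$: observe that $\psi$ carries $0:_{E_R}I$ into $0:_{E_R}I^{[p]}$, so if $r^p\in I^{[p]}$ and $u\in 0:_{E_R}I$ then $\psi(ru)=r^p\psi(u)=0$, hence $ru=0$ by injectivity, hence $r\in\Ann_R(0:_{E_R}I)=I$ for every $\m$-primary $I$; one then passes from this ideal-theoretic condition to purity (for instance after completing, where Matlis duality is a genuine duality).
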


We will also need the following lemma:
\begin{lemma}
\label{lemma on non-injectivity of top local cohomology}
Let $(R,\m)$ be an equidimensional local ring of dimension $d$ that admits a canonical module $\omega_R$. Let $I$ be a height one ideal of $R$ that
contains a nonzerodivisor.  Then $H^d_\m(I) \to H^d_\m(R)$ induced by $I\hookrightarrow R$ is not injective.
\end{lemma}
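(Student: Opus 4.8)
The plan is to pass to Matlis duals, where the map becomes dual to a restriction map on $\Hom$-modules, and then to reduce non-injectivity to the non-vanishing of a single $\Ext$. By the definition of the canonical module, $H^d_\m(R)\cong\omega_R^\vee$, and by Proposition~\ref{top local cohomology of canonical module is iso to E}, $H^d_\m(M)\cong\Hom_R(M,\omega_R)^\vee$ for every finitely generated $M$. This isomorphism is natural in $M$: since $H^d_\m$ is right exact and additive, $H^d_\m(M)\cong H^d_\m(R)\otimes_R M\cong\omega_R^\vee\otimes_R M$, and for finitely generated $M$ the latter is naturally $\Hom_R(M,\omega_R)^\vee$ by Matlis duality. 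Applying this to the inclusion $I\hookrightarrow R$, the map $H^d_\m(I)\to H^d_\m(R)$ is identified with $\rho^\vee$, where $\rho\colon\omega_R=\Hom_R(R,\omega_R)\to\Hom_R(I,\omega_R)$ is restriction along $I\hookrightarrow R$.

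Next I would describe the kernel of $\rho^\vee$ in terms of $\coker\rho$. Applying $\Hom_R(-,\omega_R)$ to $0\to I\to R\to R/I\to 0$ yields an exact sequence $\Hom_R(R/I,\omega_R)\to\omega_R\xrightarrow{\rho}\Hom_R(I,\omega_R)\to\Ext^1_R(R/I,\omega_R)\to\Ext^1_R(R,\omega_R)=0$. Because $I$ contains a nonzerodivisor $x$ of $R$, which by Lemma~\ref{nonzerodivisor on canonical modules} is also a nonzerodivisor on $\omega_R$, we get $\Hom_R(R/I,\omega_R)=0$ and $\rho$ is injective; in any case $\coker\rho\cong\Ext^1_R(R/I,\omega_R)$. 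Dualizing the exact sequence $\omega_R\xrightarrow{\rho}\Hom_R(I,\omega_R)\to\coker\rho\to 0$ and using that $E_R$ is an injective cogenerator, $\ker(\rho^\vee)\cong(\coker\rho)^\vee\cong\Ext^1_R(R/I,\omega_R)^\vee$. So the lemma reduces to showing $\Ext^1_R(R/I,\omega_R)\neq 0$, i.e.\ that $\operatorname{grade}(I,\omega_R)=1$.

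The bound $\operatorname{grade}(I,\omega_R)\geq 1$ is once more the existence of the $\omega_R$-regular element $x\in I$. For the reverse bound I would first note $\Supp\omega_R=\Spec R$: for each minimal prime $P$ of $R$, Proposition~\ref{formation of canonical module commutes with localization} (using that $R$ is equidimensional) shows $(\omega_R)_P$ is a canonical module of the Artinian ring $R_P$, hence nonzero, so $\Ann_R\omega_R$ lies in every minimal prime and is nilpotent. Thus $\operatorname{grade}(I,\omega_R)\leq\height I=1$, so $\operatorname{grade}(I,\omega_R)=1$ and $\Ext^1_R(R/I,\omega_R)\neq 0$, as needed. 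One may instead argue locally: pick a minimal prime $P$ over $I$ with $\height P=1$; then $x$ stays a nonzerodivisor in $R_P$, so $R_P$ is Cohen--Macaulay of dimension one, $IR_P$ is $\m_P$-primary, and $\Ext^1_{R_P}(R_P/IR_P,\omega_{R_P})$ is by local duality the Matlis dual of the nonzero finite-length module $R_P/IR_P$, whence $\Ext^1_R(R/I,\omega_R)_P\neq 0$.

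The one step I expect to require care is the first: confirming that $H^d_\m(I)\to H^d_\m(R)$ is exactly $\rho^\vee$, that is, the naturality in $M$ of the isomorphism in Proposition~\ref{top local cohomology of canonical module is iso to E}. Everything afterwards is formal homological algebra together with the basic facts on canonical modules recorded in Section~2. It is worth stressing that the possible failure of Cohen--Macaulayness of $R$ never intervenes: the decisive module is $\omega_R$, which is always $S_2$ with full support, and the optional localized variant only ever uses a one-dimensional Cohen--Macaulay local ring.
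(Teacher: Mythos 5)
Your proof is correct, and while it opens with the same move as the paper --- using the duality $H_\m^d(M)\cong\Hom_R(M,\omega_R)^\vee$ to translate non-injectivity of $H_\m^d(I)\to H_\m^d(R)$ into non-surjectivity of the restriction map $\rho\:\Hom_R(R,\omega_R)\to\Hom_R(I,\omega_R)$ --- it finishes differently. The paper localizes at a height one minimal prime $P$ of $I$, observes that $R_P$ is a one-dimensional Cohen--Macaulay local ring with $IR_P$ primary to $PR_P$, and applies the same top-degree duality over $R_P$ to reduce to the visibly nonzero kernel $H^0_{PR_P}(R_P/IR_P)$ of $H^1_{PR_P}(IR_P)\to H^1_{PR_P}(R_P)$; your main line instead stays global, identifying $\ker(\rho^\vee)$ with $\Ext^1_R(R/I,\omega_R)^\vee$ and proving $\Ext^1_R(R/I,\omega_R)\neq 0$ via $\operatorname{grade}(I,\omega_R)=1$ (lower bound from the $\omega_R$-regular element in $I$, upper bound from $\Supp\omega_R=\Spec R$ together with $\height I=1$, plus Rees' characterization of grade by vanishing of $\Ext$). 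Both are sound; your global argument buys a clean structural statement (the kernel is exactly $\Ext^1_R(R/I,\omega_R)^\vee$) at the cost of invoking Rees' theorem and the full-support fact, while the paper's localization needs only the top-degree duality and an elementary one-dimensional computation. Your explicit attention to the naturality of the isomorphism $H_\m^d(M)\cong\Hom_R(M,\omega_R)^\vee$ (via right exactness of $H_\m^d$ and Hom-evaluation for finitely generated $M$) is warranted and correctly handled --- the paper uses this naturality implicitly --- and your parenthetical localized variant is essentially the paper's own proof.
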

\begin{proof}
By Proposition \ref{top local cohomology of canonical module is iso to E}, to show $H^d_\m(I) \to H^d_\m(R)$ is not injective, it suffices to show
\begin{equation}
\label{dual statement for surjectivity}
\Hom_R(R, \omega_R)\to \Hom_R(I, \omega_R)
\end{equation}
is not surjective.

It suffices to show (\ref{dual statement for surjectivity}) is not surjective after we localize at a height one minimal prime $P$ of $I$. Since $I$ contains a nonzerodivisor and $P$ is a height one minimal prime of $I$, it is straightforward to see that $R_P$ is a one-dimensional Cohen-Macaulay ring with $IR_P$ a $PR_P$-primary ideal. And by Proposition \ref{formation of canonical module commutes with localization}, $(\omega_R)_P$ is a canonical module of $R_P$. Hence to show $\Hom_R(R, \omega_R)_P\to \Hom_R(I, \omega_R)_P$ is not surjective, we can apply Proposition \ref{top local cohomology of canonical module is iso to E} (taking Matlis dual of $E_{R_P}$) and we see it is enough to prove that \[H^1_{PR_P}(IR_P)\to H^1_{PR_P}(R_P)\] is not injective. But this is obvious because we know from the long exact sequence that the kernel is $H^0_{PR_P}(R_P/IR_P)$, which is nonzero because $I$ is $PR_P$-primary.
\end{proof}

The following result was first proved in \cite{Enescupseudocanonicalcover} using pseudocanonical covers
under the hypothesis that $R$ be Cohen-Macaulay and $F$-injective (see Corollary 2.5 in \cite{Enescupseudocanonicalcover}). We want to drop these conditions and only assume $R$ is equidimensional and $S_2$ (as in Remark \ref{S_2 implies equidimensional}, when $R$ is excellent, we only need to assume $R$ is $S_2$). Our argument here is quite different. Here is our main result:

\begin{theorem}
\label{Florian's condition implies F-pure} Let $(R,\m)$ be an equidimensional and $S_2$ local ring of dimension $d$ which admits a canonical ideal $I\cong \omega_R$ such that $R/I$ is $F$-pure. Then $R$ is F-pure.
\end{theorem}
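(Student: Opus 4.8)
The plan is to produce a torsion-free (injective) Frobenius action on $E_R$ and then invoke Theorem~\ref{sharp's theorem}. Since $R$ is equidimensional and $S_2$, Lemma~\ref{R=Hom} gives $R\cong\Hom_R(\omega_R,\omega_R)$, so Proposition~\ref{top local cohomology of canonical module is iso to E} applied to $M=\omega_R$ yields
\[
H^d_\m(I)\;\cong\;H^d_\m(\omega_R)\;\cong\;\Hom_R(\omega_R,\omega_R)^{\vee}\;\cong\;R^{\vee}\;=\;E_R .
\]
Because $I$ is an ideal, the $p$-th power map on $R$ carries $I$ into $I$, so exactly as for $H^d_\m(R)$ it induces a Frobenius action $\phi$ on $H^d_\m(I)$ making the latter an $R\{F\}$-module. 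Transporting $\phi$ along the displayed isomorphism produces a Frobenius action on $E_R$ compatible with its $R$-module structure, so the whole theorem reduces to showing that $\phi$ is injective.

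To study $\phi$ I would use the short exact sequence $0\to I\to R\to R/I\to 0$, which is compatible with the $p$-th power maps on the three terms (the map induced on $R/I$ being its usual Frobenius); hence the associated long exact sequence of local cohomology is $F$-equivariant. Since $R$ is $S_2$ and equidimensional it is unmixed, so by Propositions~\ref{canonical ideal has a nonzero divisor} and~\ref{canonical ideal has height one} the ideal $I$ contains a nonzerodivisor and has height one; in particular $\dim R/I<d$, so $H^d_\m(R/I)=0$. The long exact sequence therefore restricts to an $F$-equivariant exact sequence
\[
H^{d-1}_\m(R)\xrightarrow{\ \alpha\ }H^{d-1}_\m(R/I)\xrightarrow{\ \delta\ }H^d_\m(I)\xrightarrow{\ \beta\ }H^d_\m(R)\to 0 ,
\]
and $\delta$ identifies $\ker\beta$ with $H^{d-1}_\m(R/I)/\im\alpha$ as $R\{F\}$-modules.

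Now $R/I$ is $F$-pure, so by Theorem~\ref{F-pure implies anti-nilpotent} the module $H^{d-1}_\m(R/I)$ is anti-nilpotent; since $\im\alpha$ is an $F$-compatible submodule, $F$ acts injectively on $H^{d-1}_\m(R/I)/\im\alpha\cong\ker\beta$, i.e.\ $\phi$ is injective on $\ker\beta$. On the other hand, $I$ is a height one ideal of the equidimensional ring $R$ containing a nonzerodivisor, so Lemma~\ref{lemma on non-injectivity of top local cohomology} shows $\beta$ is not injective, i.e.\ $\ker\beta\neq 0$. Finally, identifying $H^d_\m(I)$ with $E_R$, which is an essential extension of its simple socle, every nonzero $R$-submodule of $H^d_\m(I)$ contains that socle; thus if $\ker\phi\neq 0$ then the socle lies in $\ker\phi\cap\ker\beta$, contradicting the injectivity of $\phi$ on $\ker\beta$. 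Hence $\ker\phi=0$, $\phi$ is torsion-free, and Theorem~\ref{sharp's theorem} gives that $R$ is $F$-pure.

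I expect the main obstacle to be organizing the input steps and their compatibility rather than any single hard computation: one needs $I$ being an \emph{ideal} isomorphic to $\omega_R$ (together with the $S_2$ and equidimensional hypotheses, which force $\Hom_R(\omega_R,\omega_R)=R$) in order to get simultaneously the identification $H^d_\m(I)\cong E_R$ and a genuine Frobenius action on it, and one must check that $\alpha$, $\delta$, $\beta$ are $F$-equivariant, which is what lets anti-nilpotence of $H^{d-1}_\m(R/I)$ be transported to $\ker\beta\subseteq H^d_\m(I)$. The decisive step is then the combination of this injectivity on $\ker\beta$ with the non-injectivity of $\beta$ (Lemma~\ref{lemma on non-injectivity of top local cohomology}) and the essential-socle property of $E_R$, which together force $\phi$ to be injective on all of $H^d_\m(I)$.
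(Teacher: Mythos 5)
Your proposal is correct and follows essentially the same route as the paper: the $F$-equivariant long exact sequence from $0\to I\to R\to R/I\to 0$, the identification $H^d_\m(I)\cong E_R$ via the $S_2$ and equidimensional hypotheses, non-injectivity of $H^d_\m(I)\to H^d_\m(R)$, anti-nilpotence of $H^{d-1}_\m(R/I)$ from $F$-purity of $R/I$, and Sharp's theorem. The only (cosmetic) difference is that you package the diagram chase as ``$F$ is injective on $\ker\beta\cong H^{d-1}_\m(R/I)/\im\alpha$'' combined with the essential-socle property of $E_R$, whereas the paper chases the socle element directly; the logic is identical.
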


\begin{proof}

First we note that $I$ is a height one ideal by Proposition \ref{canonical ideal has height one}. In particular we know that $\dim R/I< \dim R=d$. We have a short exact sequence:
\[ 0\rightarrow I\rightarrow R\rightarrow R/I\rightarrow 0. \]
Moreover, if we endow $I$ with an $R\{F\}$-module structure induced
from $R$, then the above is also an exact sequence of
$R\{F\}$-modules. Hence the tail of the long exact sequence of local
cohomology gives an exact sequence of $R\{F\}$-modules, that is, a
commutative diagram (we have $0$ on the right because $\dim R/I< d$):
\[  \xymatrix{
   H_\m^{d-1}(R) \ar[d]^{F} \ar[r]^{\varphi_1} & H_\m^{d-1}(R/I)  \ar[r]^-{\varphi_2} \ar[d]^{F}&     H_\m^d(I)  \ar[d]^{F} \ar[r]^{\varphi_3}  & H_\m^d(R) \ar[d]^{F} \ar[r] & 0\\
   H_\m^{d-1}(R) \ar[r]^{\varphi_1} & H_\m^{d-1}(R/I)  \ar[r]^-{\varphi_2} &     H_\m^d(I)  \ar[r]^{\varphi_3}  & H_\m^d(R) \ar[r] & 0
} \]where the vertical maps denote the Frobenius actions on each
module.

Since $R$ is equidimensional and $S_2$, we know that
$H_\m^d(I)\cong H_\m^d(\omega_R)\cong \Hom_R(\omega_R,\omega_R)^\vee\cong R^\vee\cong E_R$ by Proposition \ref{top local cohomology of canonical module is iso to E} and Lemma \ref{R=Hom}. We want to show that, under the hypothesis, the Frobenius action on $H_\m^d(I)\cong E_R$ is injective. Then we will be done by Theorem \ref{sharp's theorem}.

Suppose the Frobenius action on $H_\m^d(I)$ is not injective, then the nonzero socle element $x\in H_\m^d(I)\cong E_R$ is in the kernel, i.e., $F(x)=0$. From Proposition \ref{canonical ideal has height one} and Lemma \ref{lemma on non-injectivity of top local cohomology} we know that $\varphi_3$ is not injective. So we also have $\varphi_3(x)=0$. Hence $x=\varphi_2(y)$ for some $y\in H_\m^{d-1}(R/I)$. Because $0=F(x)=F(\varphi_2(y))=\varphi_2(F(y))$,
we get that $F(y)\in \im \varphi_1$. Using the commutativity of the diagram, it is straightforward to check that $\im\varphi_1$ is an $F$-compatible submodule of $H_\m^d(R/I)$. Since $R/I$ is $F$-pure, $H_\m^{d-1}(R/I)$ is anti-nilpotent by Theorem \ref{F-pure implies anti-nilpotent}. Hence $F$ acts injectively on $H_\m^{d-1}(R/I)/\im\varphi_1$. But clearly $F(\overline{y})=\overline{F(y)}=0$ in $H_\m^{d-1}(R/I)/\im\varphi_1$, so $\overline{y}=0$. Therefore $y\in\im\varphi_1$. Hence $x=\varphi_2(y)=0$ which is a contradiction because we assume $x$ is a nonzero socle element.
\end{proof}


\begin{remark}
\label{Cohen-Macaulay case}
If we assume that $R$ is Cohen-Macaulay and $F$-injective in Theorem \ref{Florian's condition implies
F-pure}, then the diagram used in the proof of Theorem \ref{Florian's condition implies F-pure} reduces to the following:
\[  \xymatrix{
   0 \ar[r] & H_\m^{d-1}(R/I)  \ar[r] \ar[d]^{F}&     H_\m^d(I)  \ar[d]^{F} \ar[r]  & H_\m^d(R) \ar[d]^{F} \ar[r] & 0\\
   0 \ar[r] & H_\m^{d-1}(R/I)  \ar[r] &     H_\m^d(I)  \ar[r]  & H_\m^d(R) \ar[r] & 0
} \] Since $R$ is $F$-injective, the Frobenius action on $H_\m^d(R)$
is injective. So this diagram and the five lemma tell us
immediately that the Frobenius action on $E_R\cong H_\m^d(I)$ is
injective if and only if the Frobenius action on $H_\m^{d-1}(R/I)$
is injective, i.e., if and only if $R/I$ is $F$-injective (or equivalently, $F$-pure
since when $R$ is Cohen-Macaulay, $R/I$ is Gorenstein). This gives a quick proof of Enescu's original result.
\end{remark}

It is quite natural to ask, when $R$ is an $F$-pure Cohen-Macaulay ring and has a canonical
module, can we always find $I\cong\omega_R$ such that $R/I$ is $F$-pure? Note that by Proposition \ref{canonical ideal has a nonzero divisor}, in this situation $R$ has a canonical ideal $I\cong\omega_R$ because $R$ is $F$-pure, hence reduced, in particular generically Gorenstein.

However the following example shows that this is not always true. So in view of Remark \ref{Cohen-Macaulay case}, even when $R$ is Cohen-Macaulay and $F$-pure, the injective Frobenius action on $E_R$ may {\it
not} be compatible with the natural Frobenius action $H_\m^d(R)$
under the surjection $E_R\cong H_\m^d(I)\twoheadrightarrow H_\m^d(R)$, no matter
how one picks $I\cong\omega_R$. I would like to thank Alberto F. Boix for pointing out to me that this example has been studied by Goto in \cite{GotoAproblemonNoetherianlocalringsofcharacteristicp}.

\begin{example}[{\it cf.} Example 2.8 in \cite{GotoAproblemonNoetherianlocalringsofcharacteristicp}]
\label{Florian's condition is stronger than F-pure} Let
$R=K[[x_1,\dots,x_n]]/(x_ix_j, i\neq j)$ where $n\geq 3$. Then $R$
is a $1$-dimensional complete $F$-pure non-Gorenstein Cohen-Macaulay local ring. So $R/I$
will be a $0$-dimensional local ring (non-Gorenstein property ensures that $I$ is not the unit ideal). If it is $F$-pure, it must be a field (since $F$-pure implies reduced). So $R/I$ is $F$-pure if and only if
$\omega_R\cong I \cong \m$. But clearly $\omega_R \neq \m$, because
one can easily compute that the type of $\m$ is $n$: $x_1+\dots+x_n$
is a regular element, and each $x_i$ is in the socle of
$\m/(x_1+\dots+x_n)\m$.
\end{example}

Last we point out a connection between our main theorem and some theory in $F$-adjunction. In fact, results of Schwede in \cite{SchwedeFAdjunction} imply that if $(R,\m)$ is an $F$-finite normal local ring with a canonical ideal $I\cong\omega_R$ which is principal in codimension 2 and $R/I$ is normal and $F$-pure, then $R$ is $F$-pure (take $X=\Spec R$, $\Delta=0$ and $D=-K_R$ in Proposition 7.2 in \cite{SchwedeFAdjunction}). The argument in \cite{SchwedeFAdjunction} is geometrical and is in terms of Frobenius splitting. Our Theorem \ref{Florian's condition implies F-pure} is a natural generalization (we don't require any $F$-finite, normal or principal in codimension 2 conditions) and we use the dualized argument, i.e., studying the Frobenius actions on local cohomology modules. I would like to thank Karl Schwede for pointing out this connection to me.

\section*{Acknowledgement}
I would like to thank Mel Hochster for many helpful and valuable discussions on the problem. I would like to thank Mel Hochster and Craig Huneke for suggesting the proof of Lemma \ref{lemma on non-injectivity of top local cohomology} used here. I am grateful to Alberto F. Boix, Florian Enescu, Rodney Sharp, Karl Schwede and Wenliang Zhang for some valuable comments. And I also thank the referee for his/her comments.

\bibliographystyle{skalpha}
\bibliography{CommonBib}
\end{document}